\newtheorem{thm}{Theorem}
\theoremstyle{definition}
\newtheorem{exa}{Example}
\newtheorem{prob}{Problem}
\title{Keyed hash function from large girth expander graphs}
\author{Eustrat Zhupa  \\
	University of Rochester \\
  500 Joseph C. Wilson Blvd.\\
Rochester, NY 14627, USA  \\
	\and 
	Monika K. Polak \\
	Rochester Institute of Technology \\
	102 Lomb Memorial Dr\\
	Rochester, NY 14623, USA
	}
\begin{document}

\maketitle
\begin{abstract}
In this paper we present an algorithm to compute keyed hash function (message authentication code MAC). 
Our approach uses a family of expander graphs of large girth denoted $D(n,q)$, where $n$ is a natural number bigger 
than one and $q$ is a prime power.
Expander graphs are known to have excellent expansion properties and thus they also have very good mixing properties.  
All requirements for a good MAC are satisfied in our method and a discussion about collisions and preimage resistance is also part of this work.
The outputs closely approximate the uniform distribution and the results we get are indistinguishable from random sequences of bits. 
Exact formulas for timing are given in term of number of operations per bit of input. Based on the tests,
our method for implementing DMAC shows good efficiency in comparison to other techniques. 
4 operations per bit of input can be achieved. The algorithm is very flexible and it works with messages of any length.
Many existing algorithms output a fixed length tag, while our constructions allow 
generation of an arbitrary length output, which is a big advantage.
\end{abstract}

\section{Introduction}
\label{intro}
Our work is motivated by the work of Charles, Goren and Lauter, \cite{cglhash}.  They proposed the construction of 
collision resistant hash function from expander graphs. The family of graphs they used were Ramanujan graphs constructed by Lubotzky, 
Philips and Sarnak (see \cite{lps88}) and Pizer's Ramanujan graphs, \cite{pizer}.  Hash functions from LPS 
require 7 field multiplications
per bit of input, but the field size may need to be bigger (1024 bit prime p instead of 256 bits, for example), and the output 
is $4 log(p)$ bits. When the hash function from Pizer's graph $G(p,l)$, for $l = 2$, requires  $2 log(p)$ field multiplications
per bit of input, which is quite inefficient (the authors propose to use a graph of cryptographic size $p\approx 2^{256}$). The output 
of this hash function is $log(p)$ bits.
The idea presented in \cite{cglhash} is very good, but collisions for this hash were found (see \cite{Tillich2008}).
However, expander graphs can be used to produce keyed hash function (message authentication code). 
When secret parameters are involved (like colouring and initial vertex) the adversary cannot find collision with mentioned method.

We propose a construction of message authentication code based on another family of expander graphs $D(n,q)$ of large girth, 
\cite{lazebnik1995}.
The most important advantages are: the output can have arbitrary length (like in case of variants of SHA-3: SHAKE128 and SHAKE256),
max message size is unlimited and the performance is very good (4 field operations per bit of input can be achieved). 
Another advantage of our construction is that graphs from the family $D(n,q)$ have a nice representation by vectors and 
incidence relations are described by system of multivariate equations, which is very easy to implement. 
From the other side such systems of nonlinear equations are used for multivariate cryptography that is considered to be a good
candidate for a post-quantum cryptography, \cite{Goubin2011}.

The basics about hash function and keyed hash function (message authentication code; MAC) can be found in \cite{stallings}.
A hash function  accepts a message $M$  as input and produces a fixed-size hash value $h = H(M)$.
Hash functions are often used to determine whether or not data has changed. In general terms, the main goal of a
hash function is to ensure data integrity.However it can be also used for authentication, to create one way password files, 
as a source of pseudorandom numbers (bits) or for intrusion and virus detection.
A cryptographic hash function is a function that is acceptable for security applications.
It means that it shall be computationally infeasible to find

\begin{description}
	\item [i] a $x$ that is the preimage of $h$ for a hash value $h = H(x)$ (one way property),
	\item [ii]  two data objects  $x \neq y$, for which $H(x) = H(y)$ (the collision-free property). 
\end{description}
In addition, a good hash function has the property that the output looks like random data and even a small change in 
input causes big changes in the output.

It is possible to use a hash function but no encryption for a message authentication.
There are a few techniques to achieve this: use hash function + encryption on the hash, compute a hash value over the 
concatenation of $M$ and $S$ (a common secret value) and append the resulting hash value to the message or use keyed hash function.
There are many reasons why it is worth to use techniques that
avoid encryption \cite{TSUD92}. For example, if there is no need to keep message confident but we want to authenticate it, those 
techniques are faster.

Figure ~\ref{mac} illustrates the mechanism for message authentication using keyed hash function.
In order to check the integrity of the message, the keyed hash function is applied to the message and the result 
is compared with the associated tag. The secret key $K_s$ is known for the receiver and for the sender, so 
the sender can be easily verified. The MAC approach guarantees data integrity and authenticity.

\begin{figure}
\centering
\includegraphics[width=3.5in]{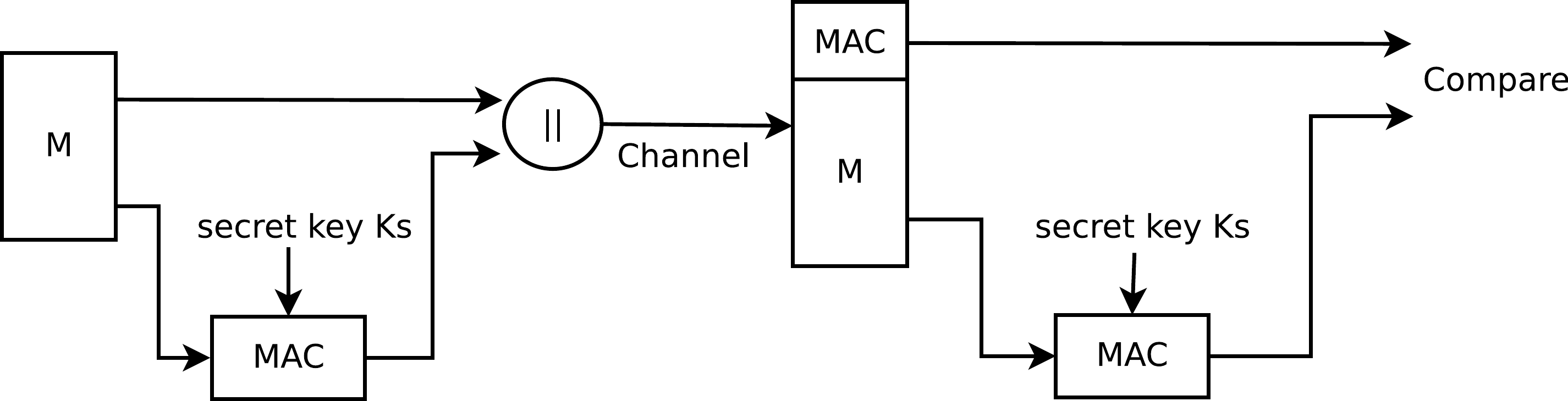}
\caption{Message authentication using MAC}
\label{mac}
\end{figure}

\section{Background}
\label{sec:2}
\paragraph{Graphs of large girth}
We define the \emph{girth} of a graph as the length of the shortest cycle. In the analyzed context girth is a very important property 
of the graph. 
Let $\{G_i\}_{i\in\mathbb N}$ be a family of $k$-regular graphs with increasing order. Let  $g_i$ and $v_i$ denote respectively 
the girth and the 
order of the graph $G_i$. A family of graphs with increasing girth is a sequence of graphs such that $g_i\leq g_j$ for $i< j$.
According to the definition introduced in \cite{biggs88}, we say that a family of $k$-regular graphs is a family of graphs of large girth if
\[g_i \geq \gamma \log_{k-1}(v_i)\]
for a constant $\gamma$ and all $i \geq1$.

A nice survey about graphs of large girth is presented in \cite{biggs98}.
It is known that $\gamma = 2$ (see \cite{boll2004}) is the best possible constant, but there is no explicit construction of such 
family of graphs for which it can be obtained. This topic started 
in 1959 when Paul Erd\H{o}s proved existence of such families with bounded degree $k$ and $\gamma=1/4$, without providing a 
construction \cite{erd59}. 
There have been numerous investigations of the field by several authors. However, until 2017 the list of major known results is short. 
The list of explicit constructions is the following:

\begin{enumerate}
	\item the first explicit construction of such family with \mbox{$\gamma=0.44$}, denoted by $X(p,q)$, where $p$ and $q$ are 
	primes was introduced by  G. A. Margulis in 1982 \cite{marguli82},
	\item generalisation of the family $X(p,q)$ proposed by M. Morgenstern \cite{morg},
	\item constructions for arbitrary $k$ with $\gamma=0.48$ and construction of family of 3-regular graphs with $\gamma=0.96$
	obtained by  V. Imrich in 1983 \cite{imrich84},
	\item family of sextet graphs introduced in 1983 by Biggs and Hoare \cite{biggshoare83} (Alfred Weiss \cite{aweiss1984} 
	proved that $\gamma=\frac{4}{3}$),
	\item second construction by G. A. Margulis in 1988 \cite{marguli88},
	\item constructions of cubic graphs, presented in a popular article \cite{biggs98},
	\item construction by Lubotzky, Phillips and Sarnak \cite{lps88} (Biggs and Boshier \cite{biggsboshier90} proved that $\gamma=\frac{4}{3}$ for 
	this family of graphs),
	\item algebraic graphs $CD(n,q)$ given by the nonlinear system of equations over finite field
$\mathbb{F}_q$, with \mbox{$\gamma\geq\log_q(q-1)$}, \cite{usty1995} (Furedi \cite{furedi1995} proved that for arbitrary prime 
power $q$: $\gamma=log_q(q-1)$),
\item the polarity graphs of $CD(n,q)$ (see \cite{laz99}) has an induced subgraph of degree $q-1$ which is a family of graphs of large 
girth (it is shown in \cite{usty2005}).
\end{enumerate}
\paragraph{Expander graphs}
An important property that a family of graphs must have in order to be a good candidate for a construction of a hash function is to be
a family of expander graphs. Cryptographic hash function from expander graphs were presented in works of \cites{zem1,zem2,cglhash}. 

Let's consider a spectrum of a graph with eigenvalues  $\lambda_0 > \lambda_1 > . . . > \lambda_{s-1}$.
A family of $k$-regular graphs of increasing order $\{G_i\}_{i\in\mathbb N}$ is called a family of Ramanujan 
graphs if $|\lambda_1(G_i)| \leq 2\sqrt{k - 1}$ for all $i$ (\cite{expander}, p. 452). 
If $k$ is stable 
and $v_i\rightarrow\infty$ the limit $2\sqrt{k - 1}$ is the best we can get. Ramanujan graphs are the best expanders. 

\section{The family of graphs $D(n,q)$}
\label{sec:4}
The family of graphs we use in our construction of a MAC was introduced in 1992 by Lazebnik and Ustimenko, \cite{usty1995}. 
It is denoted by $D(n,q)$, $n\in \mathbb N_{\geq 2}$ and $q$ is a prime power. The similar notations that we use appeared later and can be found in \cite{lazebnik1995}. 
To simplify, we don't use double notations for coordinates of vectors.
The family $D(n,q)$ is special because of a few important properties. The first one is that this is a 
family of graphs of a large girth, as mentioned in Sec.~\ref{sec:2}. Secondly, this is a family of very good 
expander graphs that are close to Ramanujan graphs. The idea of almost Ramanujan graphs was introduced in \cite{ustimenko1998}. 
We refer to a family of $k$-regular graphs as almost Ramanujan graphs if $|\lambda_1(G_i)| \leq 2\sqrt{k}$ for all $i$.  
Graphs  $D(n, q)$, $n \geq 2$ for arbitrary $q$ form a family of \mbox{$q$-regular} almost Ramanujan graphs ($|\lambda_1(G_i)| \leq 2\sqrt{q}$) 
and thus have excellent mixing properties.

Graphs $D(n,q)$ are bipartite with set of vertices $V$ containing two subsets: $V=P\cup L$, 
where $P\cap L=\emptyset$. Originally the subset of vertices $P$ is called a set of points and another set $L$ is called a set of lines. 
Let $P$ and $L$ be two copies of Cartesian power ${\mathbb{F}_q}^n$, where $n\geq 2$ is a integer. 
Two types of brackets are used in order to distinguish points from lines.
We write $(\vec{z})$ if $\vec{z}\in P$ and $[\vec{z}]$ if $\vec{z}\in L$.
The set of vertices of graph $D(n,q)$ (collection of points and lines) can be considered as  $n$-dimensional vectors over $\mathbb{F}_q$:
\begin{align*}
(\vec{p}) =& (p_{1}, p_{2}, p_{3}, p_{4}, . . .,p_n),\\
[\vec{l}] =& [l_{1}, l_{2}, l_{3}, l_{4},. . . , l_n].
\end{align*}

Coordinates of $(\vec{ p})$ and $[\vec{ l}]$ are elements of finite field $\mathbb{F}_q$. Because of this we 
have: $|P|=|L|=q^n$ and $|V|=2q^n$.
The vertex $(\vec{p})$ (point $(\vec{p})$) is incident with the vertex $[\vec{l}]$ (line $[\vec{l}]$) and we write: $(\vec{p})I[\vec{l}]$, 
if the following
relations between their coordinates hold:
\begin{equation}\label{1}
\left\{
  \begin{array}{ll}
  l_{2}-p_{2} = l_{1}p_{1}\\
  l_{3}-p_{3} = l_{2}p_{1}\\
  l_{4}-p_{4} = l_{1}p_{2}\\
	l_{i}-p_{i} = l_{1}p_{i-2}\\
  l_{i+1}-p_{i+1} = l_{i-1}p_{1}\\
  l_{i+2}-p_{i+2} = l_{i}p_{1}\\
  l_{i+3}-p_{i+3} = l_{1}p_{i+1}
  \end{array}
\right.
\end{equation}
where $i\geq5$.
The set of edges $E$ consists of all pairs $((\vec{ p}),[\vec{l}])$ for which $(\vec{ p})I[\vec{ l}]$. 
This is a family of $q$-regular graphs, which means that each vertex has exactly $q$ neighbors.
 $D(n,q)$ becomes disconnected for
$n \ge 6$. Graphs $D(n, q)$ are edge transitive. It means that their connected components are isomorphic. 
A connected component of $D(n,q)$ is denoted by $CD(n,q)$. Notice that all connected components of infinite graph $D(q)$ are $q$-regular 
trees. The length of the shortest cycle (the girth) of a graph $D(n,q)$ is given by the formula:
\[g(D(n,q))=
\begin{cases}
n+5, \text{ for odd }n\\
n+4, \text{ for even }n
\end{cases}\]

Graphs $D(n,q)$ were successfully used as a base for symmetric and public key multivariate cryptography (see for example: 
\cites{ustimenko2013,klisowski2012,muav2013,usty2005,Ustimenko2007}),  error correcting codes (see \cite{gl1997}) and pseudorandom 
number generator \cite{generator2017}. The related cryptosystems are very good candidates for post quantum cryptography and 
resistant to linearization attacks. The base of our message authentication code is a stream cipher algorithm where 
most of the constructions of message authentication functions are based on block ciphers.

\section{Keyed hash function}
Few notations are used in this work. Let $M$ denote the message, $N$ the number of bits per block of the $M$ and by 
$l(M)$ we represent the number of blocks. So the message can be expressed as $M=m\ldots m_{l(M)}$. We consider that the message is 
written in alphabet that corresponds to elements of finite field $\mathbb F_q$ and by $l(q)$ we denote the number of bits
needed to represent number $q$ (for example UTF-8 uses number field $\mathbb F_{2^8}$). 
Calculations shall be performed in bigger number field than the number field  ($\mathbb F_{q}$) that is used for the alphabet in order to
achieve collection resistance property.
Let denote by $\mathbb F_{Q}$ the number field used for calculations. The choice of $N$ determines $\mathbb F_Q$.
Any change in message shall change the hash so different input blocks $m_i$ shall correspond to different edge colouring.
To achieve it the following condition shall be satisfied \[Q\geq 2^N.\]
It is convenient to choose $Q=p$, where $p$ is a prime number. In such case field arithmetic is simply modulo $p$ arithmetic. 

From now on we denote by $h$ the size of output (tag).
The input message $M$ is used as direction for walking around the graph $D(n,Q)$. We start with initial vertex $IV=\vec{v_0}$, which 
we consider to be a point 
($(\vec{v_0})\in P$). The next visited vertex is obtained by the formula
\[N_t(\vec{w}=(w_1,w_2,\ldots,w_n))=[(w_1+t)^2,\underbrace{\ast,\ldots,\ast}_{n-1}],\]
\[N_t(\vec{w}=[w_1,w_2,\ldots,w_n])=((w_1+t)^2,\underbrace{\ast,\ldots,\ast}_{n-1}),\]
where $\ast$ can be uniquely calculated from equations~(\ref{1}) (see Example~\ref{ex:1}). Recall that $D(n,q)$ graphs are 
bipartite: points cannot be incident to points and lines cannot be incident to lines.
\begin{exa}\label{ex:1}
Let consider graph $D(6,11)$ and $\vec{w}\in P$.
\[N_3(\vec{w}=(1,8,4,2,7,0))=[(1+3)^2,\ast,\ast,\ast,\ast,\ast]\]
Names are assigned for $\ast$: $[(1+3)^2,\ast,\ast,\ast,\ast,\ast]=[5,l_2,l_3,l_4,l_5,l_6]$. Then
\begin{equation}
\left\{
  \begin{array}{ll}
  l_{2}-8 = 5\cdot1\\
  l_{3}-4 = l_{2}\cdot1\\
  l_{4}-2 = 5\cdot8\\
  l_{5}-7 = 5\cdot4\\
  l_{6}-0 = l_{4}\cdot1\\
  \end{array}
\right.
\end{equation}
where all operations are in finite field $\mathbb F_{11}$. The calculated neighbor of $(\vec{w})$ is $[5,2,6,9,5,9]$. 
\end{exa}

We propose two approaches to calculate the keyed hash function based on this family of graphs. We named the message 
authentication codes DMAC, because constructions are based on family of graphs $D(n,q)$. Keyed hash functions use a secret, which is used
to calculate the hash. We propose a secret key to be a pair $(IV,S)$. $IV$ is an initial vector of length $n$ with coordinates from $\mathbb F_q$. 
$S$ is a password of $s$ characters from alphabet $\mathbb F_q$ such that
\[s\leq \frac{1}{2}g(D(n,Q)).\]
In our constructions, after all blocks $m_i$ of a message are processed, we process a password $S$.
The details are described in the next subsections.
 
\subsection{Basic construction (DMAC-1)}
Fig.~\ref{proc_messg} illustrates the first type of proposed DMAC's.
\begin{figure}
\centering
\includegraphics[width=4.5in]{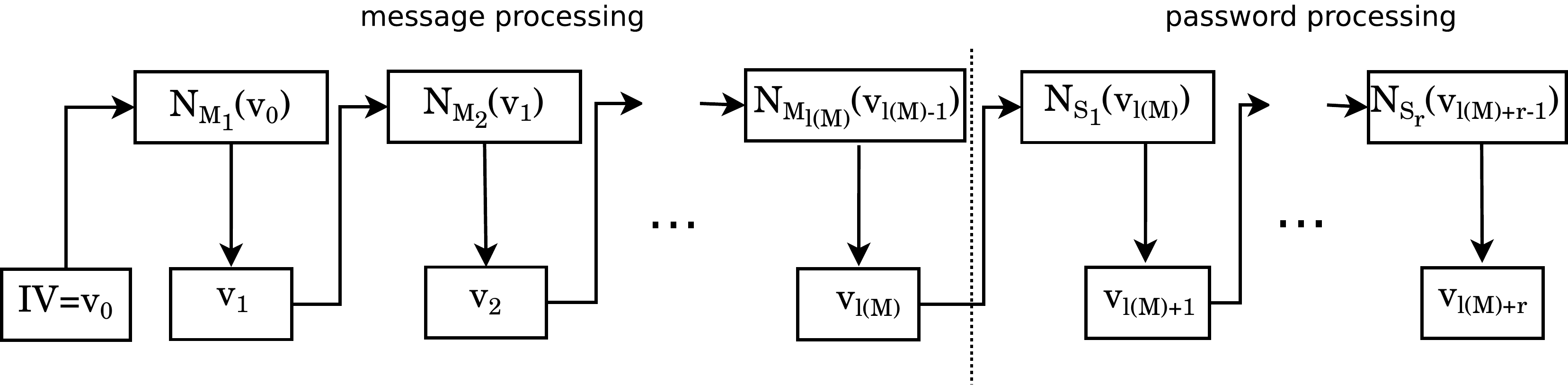}%
\caption{General structure of the DMAC-1}
\label{proc_messg}
\end{figure}
Steps to authenticate the message with DMAC-1:
\begin{enumerate}
	\item Agree secret key $K_s$, which is a pair $(IV,S)$.
	\item split $M$ in blocks $m_i$ of length $N$ (add padding if needed) 
	\item Process the message. For $i=0,\ldots,l(M)-1$ do
	\begin{itemize}
		\item Characters of the block of message  $m_i$ are concatenated to obtain a number $M_i$.
		\item Calculate the vertex $\vec{v_{i+1}}$ which is incident to vertex $\vec{v_i}$. So, we calculate the next 
		visited vertex by using operator $N_{M_i}(v_i)$:

\[N_{M_i}(\vec{v_i}=(v_1^i,v_2^i,\ldots,v_n^i))=[(v_{i\texttt{mod} n+1}^i+M_i)^2\mod Q,\underbrace{\ast,\ldots,\ast}_{n-1}],\]
\[N_{M_i}(\vec{v_i}=[v_1^i,v_2^i,\ldots,v_n^i])=((v_{i\texttt{mod} n+1}^i+M_i)^2\mod Q,\underbrace{\ast,\ldots,\ast}_{n-1}),\]

      where $\ast$ are calculated from equations~(\ref{1}). We start in vertex $\vec{v_0}$ that is equal to $IV$.	
      \end{itemize}
\item Process the password $S$. For a $i=l(M),\ldots,l(M)+r-1$ do
	\begin{itemize}
		\item Calculate the vertex $\vec{v_{i+1}}$ which is incident to vertex $\vec{v_i}$. 
		So, we calculate the next visited vertex by using operator $N_{S_i}(\vec{v_i})$:
\[N_{S_i}(\vec{v_i}=(v_1^i,v_2^i,\ldots,v_n^i))=[(v_{i\texttt{mod} n+1}^i+S_i)^2\mod Q,\underbrace{\ast,\ldots,\ast}_{n-1}],\]
\[N_{S_i}(\vec{v_i}=[v_1^i,v_2^i,\ldots,v_n^i])=((v_{i\texttt{mod} n+1}^i+S_i)^2\mod Q,\underbrace{\ast,\ldots,\ast}_{n-1}),\]
   where $\ast$ are calculated from equations~(\ref{1}). We start in vertex $\vec{v}_{l(M)}$ (the last visited vertex in step 3).	
\end{itemize}
\end{enumerate}

\subsection{Modified construction (DMAC-2)}
For $n\geq 6$ graphs become disconnected. In order to move from one component to another we can use simple 
modifications presented in Fig.~\ref{proc_messg_2}. A vectors addition $\fbox{+}$ over $\mathbb F_Q$ is added.

\begin{figure}
\centering
\includegraphics[width=4.5in]{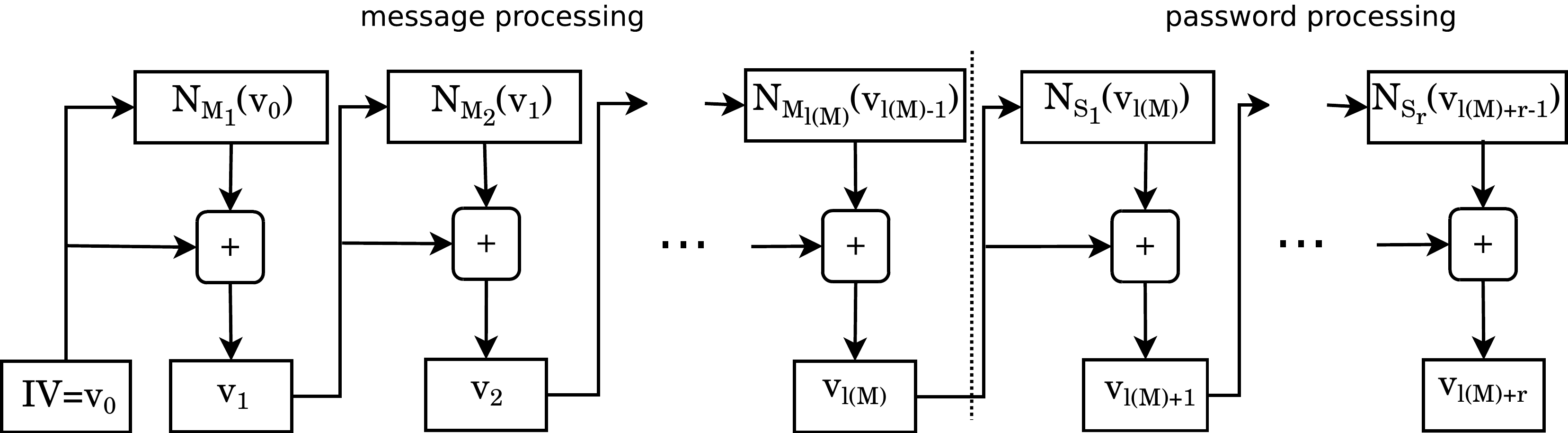}
\caption{General structure of the DMAC-2}
\label{proc_messg_2}
\end{figure}

In this case steps to authenticate the message with message authentication 
code are the same like for DMAC-1 except one additional step.
Steps to authenticate the message with DMAC-2:
\begin{enumerate}
	\item Agree secret key $K_s$, which is a pair $(IV,S)$.
	\item split $M$ in blocks $m_i$ of length $N$ (add padding if needed) 
	\item Process the message. For $i=0,\ldots,l(M)-1$ do
	\begin{itemize}
		\item Characters of the block of message  $m_i$ are concatenated to obtain the number $M_i$.
		\item Calculate the vertex $\vec{v_{i+1}}$ which is incident to vertex $\vec{v_i}$. 
		So, we calculate the next visited vertex by using operator $N_{M_i}(\vec{v_i})$. We start in 
		vertex $\vec{v_0}$ that is equal to $IV$.
	\item  Add vectors $\vec{v_i}$ and $\vec{v_{i+1}}$ over $\mathbb F_Q$.
\end{itemize}
		
	\item Process the password $S$. For a $i=l(M),\ldots,l(M)+r-1$ do
	\begin{itemize}
		\item Calculate the vertex $\vec{v_{i+1}}$ which is incident to vertex $\vec{v_i}$. 
		So, we calculate the next visited vertex by using operator $N_{S_i}(\vec{v_i})$. 
		We start in vertex $\vec{v}_{l(M)}$ (the last visited vertex in step 3).\item  Add vectors $\vec{v_i}$ 
		and $\vec{v_{i+1}}$ over $\mathbb F_Q$.
\end{itemize}
\end{enumerate}

\begin{exa} (A toy example)
Let's consider the following example. The alphabet is $\mathbb F_{29}$. We want to calculate DMAC-2 of 15 bits  ($h=15$) for a 
message $M$ and a secret key $K_s$.
\begin{center}
\begin{tabular}{c|c|c|c|c|c|c|c|c|c|c}
\hline\noalign{\smallskip}
A&B&C&D&E&F&...&Z&.&,&\textendash \\
0&1&2&3&4&5&...&25&26&27&28\\
\noalign{\smallskip}\hline
\end{tabular}
\end{center}
$M:$A\textendash BEAUTIFUL\textendash DAY corresponds to the vector $(0,28,1,4,0,...,24)$\\
$S:$.AY corresponds to the vector $(26,0,24)$\\
$IV=(5,10,27)=\vec{v_0}$

In this case $l(q)=5$ bits. We shall use $n$ that satisfies: $h\leqslant n\cdot l(q)$. Hence $n=3$.
If we set $N=25$ bits then each block has 5 characters and we decide to use $Q=33554467$. We use graph $D(3,33554467)$.
 We have 3 blocks ($l(M)=3$) total and the last block is padded:
\[m_1=(0,28,1,4,0)\]
\[m_2=(20,19,8,5,20)\]
\[m_3=(11,28,30,24,0)\]
\begin{enumerate}
 \item $i=0$ 
 \[\begin{split}
  \vec{v_1}&=N_{M_1}(\vec(v_0)=[(v_1^0+M_1)^2\mod33554467,v_2^1,v_3^1]\\
 &=[(5+28140)^2\mod 33554467,v_2^1,v_3^1]\\
 &=[20388284,1278029,6390172]
 \end{split}\]
 because
 \begin{equation*}
\left\{
  \begin{array}{ll}
  v_{2}^1-10 = 20388284\cdot5\\
  v_{3}^1-27 = v_2^1\cdot5\\
  \end{array}
\right.
\end{equation*}
\[\vec{v_1}:=\vec{v_1}+\vec{v_0}=[20388289,1278039,6390199]\]
 \item $i=1$ 
 
 \[\begin{split}
  \vec{v_2}&=N_{M_2}(\vec(v_1)=((v_2^1+M_2)^2\mod33554467,v_2^2,v_3^2)\\
 &=((1278039+20198520)^2\mod 33554467,v_2^2,v_3^2)\\
 &=(30968786,21891813,29421730)
 \end{split}\]
 because
 \begin{equation*}
\left\{
  \begin{array}{ll}
  1278039-v_{2}^2 = 20388289\cdot30968786\\
  6390199-v_{3}^2 = 1278039\cdot30968786\\
  \end{array}
\right.
\end{equation*}
\[\vec{v_2}:=\vec{v_2}+\vec{v_1}=(17802608, 23169852, 2257462)\]
 \item $i=2$ 
 \[\begin{split}
  \vec{v_3}&=N_{M_3}(\vec(v_2)=[(v_3^2+M_3)^2\mod33554467,v_2^3,v_3^3]\\
 &=[(2257462+112830240)^2\mod 33554467,v_2^3,v_3^3]\\
 &=[14009975,4873348,10691714]  
   \end{split}\]
 because
 \begin{equation*}
\left\{
  \begin{array}{ll}
  v_{2}^3-23169852 = 14009975\cdot17802608\\
  v_{3}^3-2257462 = v_2^3\cdot17802608\\
  \end{array}
\right.
\end{equation*}
\[\vec{v_3}:=\vec{v_3}+\vec{v_2}=[31812583, 28043200, 12949176]\]
 \item $i=3$ $\dots$
 \item $i=4$ $\dots$
 \item $i=5$ $\dots$
\[\vec{v_6}:=(\vec{v_6}+\vec{v_5})\mod 29=h.\]
\end{enumerate}
\end{exa}

\subsection{Properties of DMACs}

A cryptographic hash function must work as follows: a small change in the input  drastically changes the output. 
This is called \emph{avalanche effect}. DMAC-1 and DMAC-2 were implemented and tested in Python. Our DMACs are the case of a 
high-quality keyed hash functions (see Table \ref{tab:random}). Results presented in table are for the following parameters: 
graph $D(32,257)$, $N=32$, $h=256$.  
Output of the presented keyed hash functions passed the well known  Diehard tests, developed by George Marsaglia, for measuring the 
quality of random number generators, \cite{diehard}.

As defined above $h$ is the number of bits of the output (tag). Popular size of tags are $h=128$ bits, $h=256$ bits, $h=512$ bits 
and $h=1024$ bits. 
When for most of commonly used algorithms the size of tag is fixed (for example: SHA-3-224 and SHA-3-256), in our approach the tag can 
have arbitrary length.
The size of block length $N$ can be chosen quite arbitrarily but it has to be much smaller than the size of the 
 message $M$ and $N\geq l(q)$. 
 The longer the size of block, the more efficient the algorithm.
 Notice that if $l(q)=N$ then message is encoded character by character and it becomes a kind of 'string' algorithm.

The parameters of graph  $D(n,Q)$ that is used depend on the block size $N$ and the size of tag $h$. The paramater $Q$ is chosen to satisfy the property the 
inequality $Q\geq 2^N$ and the parameter $n$ is chosen as the  smallest possible $n$ that satisfies the inequality 
\[h\leqslant n\cdot l(q).\]
The most commonly used encodings are UTF-8 ($l(q)=8$), UTF-16 ($l(q)=16$), UTF-32 ($l(q)=32$). 
The Table \ref{tab:n} presents example values of $n$ when $h$ and alphabet $\mathbb F_q$ ($l(q)$) are fixed. 

\begin{exa}\label{ex:2}
Let's consider a message $M$ of 2000 characters writen in UTF-8 (alphabet $\mathbb F_{2^8}$; $l(q)=8$) parameter $S$ of 10 characters . 
We want to divide the message on blocks of 4 characters ($N=32$ bits) and compute a tag of length $h=512$ bits 
for this message.\\
Parameter $n$ can be computed from the formula $h\leq n\cdot l(q)$:
\[512=8n \Rightarrow 2^9=2^3n \Rightarrow n=64.\]
Then we choose a prime power $Q$ such that
\[Q\geq 2^{32} \Rightarrow Q=2^{32}.\]
Thus we use $D(64,2^{32})$ graph. The length of the shortest cycle in this graph is $g(D(64,2^{32}))=68$  and the order of the
graph is $2\cdot (2^{32})^{64}=2^{2048}$.
\end{exa}

\begin{table}

\caption{Example $n$ values for fixed tag size $h$ and coding}
\label{tab:n}
\centering

\begin{tabular}{lllll}
\hline\noalign{\smallskip}
 & & UTF-8 & UTF-16 & UTF-32\\ \cline{3-5}
\noalign{\smallskip}
\multirow{4}{*}{$h$}& $2^7=128$ bits&$2^7=n2^3\Rightarrow n=2^4$&$2^7=n2^4\Rightarrow n=2^3$& $2^7=n2^5\Rightarrow n=2^2$\\
&$2^8=256$ bits&$n=2^5$&$n=2^4$&$n=2^3$\\
&$2^9=512$ bits&$n=2^6$&$n=2^5$&$n=2^4$\\
&$2^{10}=1024$ bits&$n=2^7$&$n=2^6$&$n=2^5$\\
\noalign{\smallskip}\hline
\end{tabular}
\end{table}

\section{Collision resistance and one way property}
\label{sec:col}
Recall that, the family of graphs is a family of graphs of large girth and $g(D(n,q))=2[(n+5)/2]$. Hence there are no cycles shorter than
$2[(n+5)/2]$ and therefore for numer of blocks smaller than $[(n+5)/2]$ no collisions can be find.
First, we consider the following problems.

\begin{prob}
 Find a cycle in graph $D(n,Q)$ that passes through vertex $v_0$ and $v_{l(M)}$.
\end{prob}

\begin{prob}
 Find a path between vertex  $v_0$ and $h$ in graph $D(n,Q)$, that contains a subpath defined by $S$ that ends in $h$.
\end{prob}

First of all we shall notice that the secret key is a pair $(IV=v_0,S)$. For an adversary that doesn't know the secret key those problems
are not defined precisely.
The problem of collision resistance is essentially the problem of finding a shortest cycle in the graph $D(n,q)$ (similarly as 
it was considered in \cite{cglhash} for other graphs). We have the following theorem.
\begin{thm}\label{th:1}
 Finding a collision in DMACs is a solution to Problem 1.
\end{thm}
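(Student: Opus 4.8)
The plan is to build a reduction that turns any DMAC collision into a closed walk of exactly the shape Problem~1 asks for. Suppose an adversary produces two distinct messages $M\neq M'$ that hash to the same tag under a fixed secret key $(IV,S)$. Since $v_0=IV$ and the password $S$ are common to both evaluations, the two hash computations trace walks in $D(n,Q)$ that start at the same vertex $v_0$ and, after an identical password-processing stage, arrive at the same output. First I would take $M$ and $M'$ of equal block-length (padding makes this the generic case) and reduce to the message stage alone: the password stage applies the very same ordered sequence of neighbour operators $N_{S_i}$ to the two junction vertices $v_{l(M)}$ and $v'_{l(M')}$, so equality of the final tags should force equality of these junction vertices, $v_{l(M)}=v'_{l(M')}$. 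Reading the incidence relations~(\ref{1}) in reverse shows that, once the shared colour $S_i$ and the resulting coordinate are fixed, the non-leading coordinates of a predecessor are uniquely determined, so the password map is \emph{almost} invertible and this pull-back step is where I must be careful.

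Granting the common junction vertex, I would then glue the two message walks. Writing $W\colon v_0\to v_1\to\dots\to v_{l(M)}$ for the walk induced by $M$ and $W'\colon v_0\to v_1'\to\dots\to v'_{l(M)}=v_{l(M)}$ for that induced by $M'$, the concatenation of $W$ with the reversal $\overline{W'}$ is a closed walk based at $v_0$ whose unique turn-around vertex is $v_{l(M)}$. By construction it meets both of the vertices $v_0$ and $v_{l(M)}$ singled out in Problem~1. To upgrade this closed walk to a genuine cycle, let $j$ be the first block index at which $M$ and $M'$ disagree; then $v_i=v_i'$ for $i\le j$ while the two walks leave $v_j$ toward different successors, so the glued walk does not cancel at that branch point and a non-trivial cycle survives. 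Here the local structure of $D(n,Q)$ helps: since the connected components of the infinite graph $D(q)$ are $q$-regular trees, any incidental backtracking away from the branch point can be pruned, leaving an honest cycle that still threads $v_0$ and $v_{l(M)}$.

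The hard part will be the non-degeneracy of this cycle, and two issues drive it. The neighbour operator advances a coordinate by the square $(\,\cdot\,+t)^2\bmod Q$, which is two-to-one, so distinct colours $M_j\neq M'_j$ need not yield distinct successors; I would have to check that the successors $v_{j+1}$ and $v'_{j+1}$ really differ at the first divergence, and if they happen to collide, push the argument one step further along the walk. The second issue is bookkeeping: after trimming backtracking I must ensure the surviving cycle still passes through both prescribed vertices and not merely through some interior pair, which is where the shared-prefix and shared-suffix structure of the two walks has to be tracked explicitly. Once a genuine cycle through $v_0$ and $v_{l(M)}$ is in hand it is by definition a solution to Problem~1, and combining it with the large-girth bound $g(D(n,Q))=2\lfloor (n+5)/2\rfloor$ recorded earlier yields the quantitative consequence that no collision exists for messages of fewer than $\lfloor (n+5)/2\rfloor$ blocks.
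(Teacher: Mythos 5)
Your proposal follows essentially the same route as the paper's proof: reduce to DMAC-1, argue that a collision forces the two message walks to meet at a common junction vertex $v_{l(M)}=v_{l(M')}$, and then observe that two distinct walks from $v_0$ to the same vertex close up into a cycle through both of the prescribed vertices, which is exactly Problem~1. The differences lie in how the junction step is justified and in how honestly the degeneracies are treated. For the junction step the paper argues via the girth bound: since $s\le\frac{1}{2}g(D(n,Q))$, distinct junction vertices carried by the common password walk could not reach the same tag; you instead argue that the password stage is (almost) invertible by reading the incidence relations~(\ref{1}) backwards. Both arguments hinge on the same delicate point, and the caveat you flag is genuine and is \emph{not} resolved in the paper: the operator $N_t$ assigns first coordinate $(w_1+t)^2\bmod Q$, which is two-to-one in $t$ (colours $t$ and $t'$ with $t+t'=-2w_1$ give the same neighbour), so the paper's assertion that ``different blocks correspond to different edges incident to a given vertex'' is false as stated, and a collision could in principle arise from a degenerate closed walk (immediate re-merging at the first divergence) rather than from an honest cycle. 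Likewise the paper simply asserts ``two paths in a graph that start and end in the same vertices form a cycle'' without the pruning and bookkeeping you describe, and it does not verify that the surviving cycle still passes through both $v_0$ and $v_{l(M)}$. So your plan is the paper's plan, but the open items you list are exactly the places where the published proof is also incomplete; closing them would require either restricting the colour set so that $t\mapsto(w_1+t)^2$ is injective, or treating the two-to-one coincidences as a separate, explicitly handled source of collisions.
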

\begin{proof}
If we set $v_0$ to be zero vector then DMAC-1 and DMAC-2 are exactly the same functions. Finding a collision in DMAC-2 cannot be easier than finding
a collision in DMAC-1. Therefore, without lost of generalisation we can consider collision resistance of DMAC-1.

To compute a hash DMAC-1 we start a walk in vertex $v_0$ and an input $M=m_1m_2\dots m_{l(M)}$ (message $M$) gives us directions $M_i$ how to walk in this graph.
Each vertex is $Q$-regular and $M_i\leq Q$ so different blocks correspond to different edges incydet to a given vertex. 
To find a collision we have to find two different inputs $M\neq M'$, which hash to the same output $h$. 
To calculate output $h$ first we have to calculate vertex 
$v_{l(M)}$ and then using secret $S$ we can walk to  vertex that corresponds to $h$. 
If $v_{l(M)}\neq v_{l(M')}$ then $S$ such that: $s<\frac{1}{2}g(D(n,Q))$ would lead us to different $h$ and $h'$ ($g(D(n,Q)$ denotes the length of the shortest cycle in graph).
Hence, to find a collision we have to find two different inputs $M\neq M'$, which leads us to one vertex $v_{l(M)}=v_{l(M')}$.
Two paths in graph, that start and end in the same vertices form a cycle.
\end{proof}

Graphs $D(n,q)$ form a family of a simple graphs. In this case $O(|V|)$ time is required to find a cycle in an $|V|$-vertex graph,
that starts in a given vertex. However here $|V|=2Q^n$ so the complexity becomes exponential. 
Then $O(|V|)$ is a time required requaired to find any cycle. In our case we are looking for a specific cycle that contains also $v_{l(M)}$.\\
The family of graphs is a family of graphs of large girth and 
$g(D(n,q))=2[(n+5)/2]$. Therefore, the problem of finding a shortest cycle in $D(n,q)$ graphs cannot be easier than the general 
problem of finding the shortest path in a regular graph, which is considered to be hard.
\begin{thm}
 Finding a preimage of $h$ is a solution to Problem 2.
\end{thm}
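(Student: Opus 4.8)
The plan is to mirror the structure of the proof of Theorem~\ref{th:1}, establishing a correspondence between the preimage problem for the tag $h$ and the path-finding Problem~2. First I would make precise what a preimage means in this setting: given a target output $h$, a preimage is a message $M'=m'_1m'_2\dots m'_{l(M')}$ such that running DMAC on $M'$ (with the secret key $(IV,S)$) produces exactly $h$. As in the collision argument, I would begin by reducing to DMAC-1 by setting $v_0$ to be the zero vector, so that DMAC-1 and DMAC-2 coincide and finding a preimage for DMAC-2 is no easier than for DMAC-1; this lets me argue about DMAC-1 without loss of generality.

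Next I would unwind the definition of the hash computation as a walk. Starting at the initial vertex $v_0=IV$, the message blocks $M'_i$ dictate a sequence of edges, carrying the walk to the vertex $v_{l(M')}$; the password $S$ then appends a fixed subpath of length $r$ ending at the vertex whose (reduced) coordinates equal $h$. Since each block $M'_i \leq Q$ and the graph is $Q$-regular, every admissible direction corresponds to a legitimate edge out of the current vertex, so any walk of this form is realizable. Conversely, any path in $D(n,Q)$ from $v_0$ to the vertex representing $h$ that terminates with the $S$-prescribed subpath can be read off, edge by edge, as a sequence of block values, hence as a candidate message. This is exactly the object Problem~2 asks for: a path between $v_0$ and $h$ that contains the subpath defined by $S$ and ends in $h$.

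I would then close the equivalence in both directions. Given a preimage $M'$, the walk it induces is precisely such a path, so a preimage yields a solution to Problem~2. Given a solution to Problem~2, the edges of the portion of the path before the $S$-subpath decode to block directions $M'_i$, and because the graph is $Q$-regular with distinct edges for distinct directions, this decoding is unambiguous and reconstructs a message hashing to $h$; thus a solution to Problem~2 yields a preimage. Combining the two directions shows finding a preimage is equivalent to solving Problem~2.

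The main obstacle I anticipate is handling the role of the secret $S$ and the initial vertex $v_0$ honestly. As the authors themselves note, for an adversary ignorant of $(IV,S)$ Problem~2 is not even fully specified, so the cleanest statement is an equivalence for a party who knows the key, with the cryptographic hardness coming from the secrecy of $(IV,S)$ rather than from the reduction itself. A secondary technical point is the decoding step: I must verify that the map from the walk's edges back to block values $M'_i$ is well defined, which relies on the fact (used in the neighbor operator $N_t$ and Example~\ref{ex:1}) that the starred coordinates are \emph{uniquely} determined by equations~(\ref{1}) once the leading coordinate $(w_1+t)^2$ is fixed, so that the edge leaving a vertex is in bijection with the choice of direction $t \in \mathbb{F}_Q$. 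Establishing this bijection carefully is the crux that makes both directions of the reduction go through.
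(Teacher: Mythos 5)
Your proposal follows essentially the same route as the paper's proof: reduce to DMAC-1 as in Theorem~\ref{th:1}, interpret the hash computation as a walk from $v_0$ driven by the block values $M_i$ (each corresponding to a unique edge since the graph is $Q$-regular and the starred coordinates are uniquely determined by equations~(\ref{1})), and conclude that finding a preimage amounts to finding the right path from $v_0$ to $v_{l(M)}$ with the $S$-subpath appended. Your write-up is in fact more complete than the paper's, which only sketches the forward direction and omits the explicit decoding of a path back into a message; the extra care you take with the direction-to-edge bijection and with the caveat about the secrecy of $(IV,S)$ is consistent with, and strengthens, the published argument.
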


\begin{proof}
Because of the reason given in the proof of Theorem \ref{th:1}, without lost of generalisation we can consider preimage resistance of DMAC-1.
If we have knowledge about $S$ then vertex $v_{l(M)}$  can be computed.
An input $M=m_1m_2\dots m_{l(M)}$ message gives us directions $M_i$ how to walk in this graph. We start a walk in initial vertex $IV=\vec{v_0}$.
The second visited vertex is defined by the operator $N_{M_1}(\vec{v_0})$ and uniquely determined from equations~(\ref{1}).
The next visited vertex is defined by the operator $N_{M_2}(\vec{v_0})$ and uniquely determined from equations~(\ref{1}). 
We repeat the calculations until we deal with all blocks $m_i$.
Graph $D(n,Q)$ is $Q$-regular and $M_i\leqslant Q$ so different $M_i$ gives us different directions. Each $M_i$ corresponds exactly to one edge incidence to a given vertex. There are many different paths from $v_0$ to $v_{l(M)}$. Find the preimage is 
to find the right path from $v_0$ to $v_{l(M)}$. Notice that $v_0$ is a part of a secret key.
\end{proof}

Composition of operators $N_{M_i}$ and $N_{S_i}$ gives a nonlinear system of $n-1$ cubic eqations (see Theorem 2 in \cite{aw}). Variables are:
numbers $M_i$, $s$ character of $S$ and $n$ coordinates of initial vertex $v_0$. There is $l(M)+n+s$ total variables in this system.\\
In general, solving a set of quadratic equations over a finite field is NP-hard ($MQ$ problem) for any finite field. 
 There is a conjecture that this is a  probabilistically hard problem and Shor’s algorithm cannot be used to speed it up, \cite{Goubin2011}. 
 Solving a set of cubic equations over a finite field cannot be easier than solving the  $MQ$ problem. 
 However, the system related to the set of equations \ref{1} and other systems used for multivariate cryptography are not random, for a large enough parameters it is computationally 
 infeasible to solve them (see \cite{Goubin2011}).

Brute force attack to completely break the keyed hash function (find secret $K_s$) may require to check $q^nq^r$ possibilities ($q^n$ 
possible initial vectors and $q^r$ possible passwords of length $r$), if we consider that the length of the $S$ is known.
A very efficient algorithm to find the shortest path in a graph is Dijkstra's algorithm of complexity $O(|V|\log|V|+|E|)$ and it can be 
adopted to find collisions.
In the case of the used graphs it gives $O(2Q^n\log(2Q^n)+Q^{n+1})$ and it's not more efficient than brute force. 
The complexity is increased because calculations are made over bigger number field $\mathbb  F_{Q}$, without changing the alphabet 
for $IV$ and $S$.
\small{
\begin{table}
\caption{Summary of the Tests}
\label{tab:random}
\centering
\begin{tabular}{p{0.08\linewidth}p{0.2\linewidth}p{0.2\linewidth}p{0.05\linewidth}p{0.37\linewidth}}
\hline\noalign{\smallskip}
	MAC type & Original Message & Output & $S$ & IV\\
\noalign{\smallskip}\hline\noalign{\smallskip}
\multirow{4}{*}{DMAC-1} & The sky announced a beautiful day: the setting moon shane pale in an immense field of azure, which, 
towards the east, mingled itself lightly with the rosy dawn. & 
49 a7 df d0 58 51 6a 9d 4e 94 2d 43 2a b9 60 f2 ab 22 5a a8 18 13 
20 7d f7 1 5f ad 21 3f 56 45 & hint & [147, 217, 2582, 2976, 1718, 1599, 27, 1083, 471, 1461, 1076, 2255, 
 2875, 2696, 2793, 1015, 1477, 1271, 2856, 221, 961, 2839, 1789, 1845, 1157, 622, 758, 882, 210, 1846, 3009, 410]

\\

 & The sky ... & 
2d 48 b6 e4 50 de 8b d2 2e b4 1b d fb 9f b6 63 a1 7b e2 ee 4 e7 b1 ed 88 25 51 ca c4 7d e3 36
 & hint & [\underline{149}, \underline{219}, 2582, 2976, 1718, 1599, 27, 1083, 471, 1461, 1076, 2255, 
 2875, 2696, 2793, 1015, 1477, 1271, 2856, 221, 961, 2839, 1789, 1845, 1157, 622, 758, 882, 210, 1846, 3009, 410]

\\
 & The sky ... & 
78 46 b2 50 81 c1 ba b2 c c4 e6 6c 7a 69 b4 fd c6 64 a 69 30 
e0 4d 30 1e e7 9c 36 55 e d1 8a & h\underline{u}nt & [147, 217, 2582, 2976, 1718, 1599, 27, 1083, 471, 1461, 1076, 2255, 
 2875, 2696, 2793, 1015, 1477, 1271, 2856, 221, 961, 2839, 1789, 1845, 1157, 622, 758, 882, 210, 1846, 3009, 410]

\\
 & \underline{Da} sky ... &  86 f8 16 c5 dd 100 28 d1 91 8f 48 3c ff 3a a6 e2 b1 31 23 91 17 73 64 86 be 6b 81 ad 5e 10 67 56 &
hint & [147, 217, 2582, 2976, 1718, 1599, 27, 1083, 471, 1461, 1076, 2255, 
 2875, 2696, 2793, 1015, 1477, 1271, 2856, 221, 961, 2839, 1789, 1845, 1157, 622, 758, 882, 210, 1846, 3009, 410] \\

\noalign{\smallskip}\hline\noalign{\smallskip} 

\multirow{4}{*}{DMAC-2} &The sky announced a beautiful day... & 
6f ed d1 fb 2f cf 56 fc a9 5e c8 1d 90 ec f7 4a df 42 1a 1e 3b 16 
62 54 90 81 a2 a4 7e 3f 8d db & red & [147, 217, 2582, 2976, 1718, 1599, 27, 1083, 471, 1461, 1076, 2255, 
 2875, 2696, 2793, 1015, 1477, 1271, 2856, 221, 961, 2839, 1789, 1845, 1157, 622, 758, 882, 210, 1846, 3009, 410]

\\
 
 & The sky ... & 
59 9 c2 a b1 ba 88 1b 12 e7 f3 a 65 71 87 7b 25 c4 20 
57 38 6e 54 b0 b8 19 74 5b d8 33 46 d & r\underline{i}d & [147, 217, 2582, 2976, 1718, 1599, 27, 1083, 471, 1461, 1076, 2255, 
 2875, 2696, 2793, 1015, 1477, 1271, 2856, 221, 961, 2839, 1789, 1845, 1157, 622, 758, 882, 210, 1846, 3009, 410]

\\

 & The sky ... & 
d7 92 60 73 0 dd ef fa 6 3 f2 c6 9b 62 c6 58 e7 59 31 a5 2d 
5e 34 67 7d a9 95 30 86 12 9a e0 & red & [\underline{149}, \underline{221}, 2582, 2976, 1718, 1599, 27, 1083, 471, 1461, 1076, 2255, 
 2875, 2696, 2793, 1015, 1477, 1271, 2856, 221, 961, 2839, 1789, 1845, 1157, 622, 758, 882, 210, 1846, 3009, 410]

\\

 & \underline{Da} sky ... &  54 c7 8 d3 d9 fc c1 ed 57 18 9d 74 62 d2 5d 35 5a cd 15 3c b7 19 9a 3c 79 1d 4c 68 69 3b d8 6b &
red & [147, 217, 2582, 2976, 1718, 1599, 27, 1083, 471, 1461, 1076, 2255, 
 2875, 2696, 2793, 1015, 1477, 1271, 2856, 221, 961, 2839, 1789, 1845, 1157, 622, 758, 882, 210, 1846, 3009, 410]\\ 

\noalign{\smallskip}\hline
\end{tabular}
\end{table}}

\section{Timings}

The number of operations per bit of input depend on block length $N$ and on the parameter $n$ of the graph $D(n,Q)$.
The number of field operations in the system of equations~\ref{1}  is $2(n-1)$ (one step of the walk in the graph $D(n,Q)$ 
costs $2(n-1)$ field operations).

To process one block of input with DMAC-1 we need two additions and one multiplication as specified by operator $N_t(w)$ (to calculate the 
first coordinate of the neighbor), $\mod$ operation and $2(n-1)$ field operations. After the block is processed we process vector $S$.
 Therefore, the number of operations per bit of input for DMAC-1 is given by the formula
\[\dfrac{(2n+2)\cdot l(M)+(2n+2)\cdot s}{N\cdot l(M)}=\dfrac{2n+2}{N}\left(1+\dfrac{s}{l(M)}\right),\]
where $r$ is the length of $S$ and $l(M)$ is the number of blocks, as specified above.

To process one block of input with DMAC-2 we need to add vectors over number field $\mathbb F_q$ which require $n$ field additions.
It gives us the following formula for the number of operations per bit of input when DMAC-2 is used
\[\dfrac{(2n+2+n)l(M)+(2n+2+n)s}{N\cdot l(M)}=\dfrac{3n+2}{N}\left(1+\dfrac{s}{l(M)}\right).\]

If the password $S$ is short (not more than 10 characters) and the message, for which we want to calculate the tag, is long 
(which is true in general when MACs are used) then 
$\frac{r}{l(M)}$ is very small.
Notice that the change of number field doesn't increase much the number of operations per bit of input (except for the fact that resulting vector has to 
be divided $\mod q$). 

\begin{exa}
Lets consider data like in Example \ref{ex:2}. 
The  length of $M$ in bits is $2000\cdot l(q)=2000\cdot 8$. The block length is $N=32$ bits so the number of blocks 
$l(M)=\dfrac{2000\cdot8}{32}=500$.
In this case the number of field operations per bit of input is 
\[\dfrac{130}{32}\left(1+\dfrac{10}{500}\right)+\dfrac{64}{2000\cdot 8}\approx 4,\]
which is very efficient. 
\end{exa}

\section{Conclusions}

A new technique for message authentication was presented in this work. To the best of our knowledge,
the family of graphs $D(n,q)$ has never been used before in this context. 
The algorithms here introduced, for DMAC-1 and DMAC-2 respectively, were implemented in Python and tested with different inputs. 
The results of our tests and the theoretical base show that the technique we introduce is a very efficient and safe approach 
to compute message authentication code.

\subsection*{Acknowledgement}
The authors would like to express their gratitude to Vasyl Ustimenko for sharing his knowledge about graphs $D(n,q)$, which made 
this research possible. Special thanks also to Stanislaw Radziszowski for his useful remarks.

%
%

\end{document}